\theoremstyle{definition}
\newtheorem{definition}{Definition}
\theoremstyle{remark}
\theoremstyle{definition}
\newtheorem{theorem}{Theorem}
\begin{document}

\title{A Case Study on Regularity in Cellular Network Deployment}

\author{J.S. Gomez,
        A. Vasseur,
        A. Vergne,
        P. Martins,
        L. Decreusefond,
        and~Wei~Chen} 
\thanks{J.S. Gomez is with the Institut Mines-T\'el\'ecom, T\'el\'ecom ParisTech, CNRS, LTCI (Paris, France) and the Departement of Electronic Engineering of Tsinghua University (Beijing, China), e-mail: jean-sebastien.gomez@telecom-paristech.fr}% 
\thanks{A. Vasseur, A. Vergne, P. Martins and L. Decreusefond are with the Institut Mines-T\'el\'ecom, T\'el\'ecom ParisTech, CNRS, LTCI (Paris, France), e-mails: \{aurelien.vasseur, anais.vergne, philippe.martins, laurent.decreusefond\}@telecom-paristech.fr }% 
\thanks{Wei Chen is with the Department of Electronic Engineering of Tsinghua University (Beijing, China), e-mail: wchen@tsinghua.edu.cn}

\maketitle

\begin{abstract}
  This paper aims to validate the $\beta$-Ginibre point process as a
  model for the distribution of base station locations in a cellular
  network. The $\beta$-Ginibre is a repulsive point process in which
  repulsion is controlled by the $\beta$ parameter. When $\beta$ tends
  to zero, the point process converges in law towards a Poisson point
  process. If $\beta$ equals to one it becomes a Ginibre point
  process. Simulations on real data collected in Paris (France) show
  that base station locations can be fitted with a $\beta$-Ginibre point
  process. Moreover we prove that their superposition tends to a
  Poisson point process as it can be seen from real data. Qualitative
  interpretations on deployment strategies are derived from the model
  fitting of the raw data.
\end{abstract}

% \begin{IEEEkeywords}
%   Stochastic geometry, Ginibre point process, $\beta$-Ginibre point
%   process, Poisson point process, point process fitting, wireless
%   networks deployment, distribution convergence.
% \end{IEEEkeywords}

%  \ifCLASSOPTIONpeerreview
%  \begin{center} \bfseries EDICS Category: 3-BBND \end{center}
%  \fi
% %

% \IEEEpeerreviewmaketitle

\section{Introduction}

{S}{tatistical} models of transmitters locations aim to
provide tools to understand real network deployment. For
telecommunication companies, the \textit{a priori} knowledge of the
distribution of the antenna locations helps to predict and manage the
costs of a network deployment. Such models also provide mathematical
tractable methods to estimate the coverage probability of a given
network. These results would also interest telecommunication
regulators and public health authorities, since electromagnetic
exposure has become a worldwide issue.

The first model introduced in radio networks was the regular hexagonal
deterministic network. Although the regular lattice of cells gives an
approximation of the cellular concept, it fails to catch the proper
reality of network deployment. It proves also to be an optimistic
bound in terms of interference estimation \cite{Baccelli_article}. The
random nature of the parameters involved in defining a proper coverage
strategy makes it difficult to use a deterministic and regular
model. Stochastic geometry ideas, especially about random point
processes -i.e. Poisson Point Processes (PPP), Mat\'ern hard-core
point processes, Ginibre Point Process (GPP) and $\beta$-Ginibre Point
Process ($\beta$-GPP)- were then widely explored in the wireless
communication literature.  Pioneer works in this field were realized
by Baccelli et al. on PPP \cite{Baccelli_livre}. Many results were
then derived, such as the coverage probability in respect of the
SINR. Last developments of PPP models also include modelling of
heterogeneous ($k$-tier) networks \cite{k-tier}. However, positions of
the base stations in a PPP deployed network are uncorrelated with one
another. Therefore clusters of points may occur. Mean inter-site
distance of such configurations is thus smaller than what happens in
reality. As a result, PPP models generate more interference than that
of a real network. The articles of Andrews et al. \cite{Baccelli_article} and Nakata et al. \cite{Nakata20147} show that the PPP provide with the most pessimistic prediction of outage probability compared with other repulsive models. 

Spatial correlations between base stations locations exist, since they
have to be separated from one another to maximize coverage and
minimize inter-site interferences. To take into account these effects,
repulsive (or regular) models were introduced in the literature. A
simple approach is to transform a PPP into a repulsive point process
by thinning. Such processes are called Mat\'ern hard-core point
processes. Interferences for such deployed networks were investigated
in \cite{matern_hard_core} but hard-core models proved to be difficult
to manipulate since the outage probability can not be analytically
deduced.  Soft-core processes then rose community's interest. Among
them, GPP and $\beta$-GPP (two determinantal point processes) were
investigated in the wireless communication field. They were at first
introduced by Shirai et al. \cite{shirai_taka_derterminant} in quantum
physics to model fermion interactions. Works of Miyoshi et
al.\cite{miyoshi2014cellular} and Deng et
al. \cite{haenggi2014ginibre} have derived coverage probability in
respect of the SINR for both GPP and $\beta$-GPP models.

In this paper, we show that base station distribution for an operator
and for a technology can be fitted with a $\beta$-GPP distribution in
the Paris area. The distribution of all base stations of all operators can be
fitted with a PPP. Our main contribution lies in the theoretical
justification of this phenomenon. We prove that the superposition of
different $\beta$-GPPs converges in distribution to a PPP process.
Finally we draw conclusions on the coverage-capacity trade-off made by
different operators. Qualitative results are derived from the inferred
values of $\beta$ and the intensity $\lambda$. $\lambda$ can give information on the dimensioning
strategy adopted by the operator, while $\beta$ give insights on the
coverage.

Other existing papers on antenna deployment models mainly consider the
computation of the SINR and coverage probability for a wide set of
point processes. We are instead interested in validating the
$\beta$-GPP model and the PPP superposition model with real data on a
dense urban area. Such a case study is made possible because French
frequency regulator (ANFR) provides location in an open access
database \cite{Cartoradio}.

In Section II, we define mathematically the $\beta$-GPP and introduce
the convergence in distribution theorem for a superposition of
$\beta$-GPP. In Section III, we give the method used to fit the
$ \beta$-GPP model with the actual data.  A qualitative
interpretation of the deployment strategies is then realized from
inferred $\beta$ and $\lambda$ .

\section{Theoretical Model}

In this section, we recall the definition of the $\beta$-GPP. We also introduce the theorem $\beta$-GPP convergence theorem.   
\subsection{Definitions}
Let $\mathbb{C}$ denote the complex plane. Let $\Phi$ be a realisation of a certain point process.  Let $x_1, \ldots, x_k$, be $k$-tuples of distinct pairwise elements of $\mathbb{C}$.  Let $R \subseteq \mathbb{C}$ be a Borel set and $f: R^n \rightarrow \mathbb{R}_+^*$ be any Borel function.
\begin{definition}[Correlation function] 
The $k$-th joint density function of a point process is defined by: 
\begin{align*}
\mathbb{E} \! \sum_{ \substack{\left( x_1, \ldots x_n \right) \\  x_i  \neq x_j}} & \!  f\left(x_1, \ldots, x_k\right)  \!  \\ = \! \int_{R^n} \! f\left(x_1, \ldots, x_k \right)& \! \rho^{(k)} \! \left(x_1, \ldots, x_k \right) \! dx_1  \! \ldots \! dx_k.
 \end{align*}
\end{definition}
Let $\beta$ be a real number in $]0,1]$, let $\lambda$, a strictly positive real number be the intensity of a point process and $c = \lambda \pi$.
The $\beta$-GPP is a determinantal point process that can be defined by its correlation functions. 
\begin{definition}[$\beta$-Ginibre]
\begin{equation*}
 \rho^{\left(k \right)} \left( x_1, \ldots ,x_k \right) = \det \left( K_{c, \beta} \left( x_i,x_j\right), 1 \leq i,j \leq k \right),
\end{equation*}
where $K_{c, \beta}$ is a kernel such as $\forall (x,y) \in \mathbb{C}^2$:
\begin{equation*}
K_{c, \beta} \left( x,y \right) = \frac{c}{\pi} e^{- \frac{c}{2 \beta} \left( |x|^2 + |y|^2 \right)} e^{ \frac{c}{\beta} x \bar{y} },
\end{equation*}
with the respect of the Lebesgue measure.
\end{definition}
A $\beta$-GPP can also be obtained by thinning and rescaling a GPP. Each point of the GPP is kept independently with probability $\beta$, then a rescaling of ratio $\sqrt{\beta}$ is applied in order to maintain the original intensity. 
If $\beta = 1$, the $1$-GPP is equal to the GPP. For $\beta$ tending to zero, the thinning increases the randomness of the process, and then the kernel $K_{c,\beta}$ tends to a diagonal matrix. Correlation between points disappears. The Laplace transform of a $\beta$-GPP is given for all $f$ by $L(f)= \det(I-\sqrt{1-e^{-f}}K_{c, \beta} \sqrt{1-e^{-f}})$, where the determinant is the one of Carleman-Fredholm. It is easy to see that the $\beta$-GPP Laplace transform converges then to the one of a PPP when $\beta$ goes to zero.

\subsection{Properties of superposition of $\beta$-GPP}
One of the main novelties in this paper is the study of the superposition of multiple $\beta$-GPP. We give the key convergence theorem for the $\beta$-GPP. \\
Let $\Phi_1, \Phi_2,...,$ be point processes in a metric space $R$. Let $\chi$ be the space of all locally finite subsets (configurations) in $R$.
We now introduce the $\beta$-GPP superposition convergence theorem. \\
For all $n \in \mathbb{N}^*$, let $\Phi_n  $ be the superposition of $\left( \Phi_{n,i} \right)_{i \in [|1,n|]}$ family of $\beta_{n,i}$-GPP with intensity $\lambda_{n,i}\!=\!\left(n \pi\right)^{-1} c_i$ and $\beta_{n,i} \in ]0,1]$.
\begin{theorem}[$\beta$-GPP superposition convergence theorem] 
\label{Theo:1}
Let us suppose that:
\begin{enumerate}[(i)]
\item the sequence $(c_i)_{i \in \mathbb{N}^*} \subset \mathbb{R}_+^*$ is bounded,
\item $\underset{n\to +\infty}{\lim} n^{-1} \sum_{i=1}^n c_i$ is finite and equal to $c$.
\end{enumerate} 
Then $\left( \Phi_n \right)_{n \in \mathbb{N^*}} $ converges in distribution to a PPP $\Phi$ with intensity $c \pi^{-1}$.
\end{theorem}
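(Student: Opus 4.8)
The plan is to establish the convergence in distribution through Laplace functionals: by the standard criterion for weak convergence of point processes (Kallenberg), it is enough to show that $L_{\Phi_n}(f):=\mathbb{E}\big[e^{-\sum_{x\in\Phi_n}f(x)}\big]$ converges to $L_\Phi(f)=\exp\!\big(-c\pi^{-1}\!\int_{\mathbb{C}}(1-e^{-f(x)})\,dx\big)$ for every nonnegative continuous compactly supported $f\colon\mathbb{C}\to\mathbb{R}_+$, $\Phi$ being the PPP of intensity $c\pi^{-1}$. Fix such an $f$ and set $S=\operatorname{supp} f$. Because the family $(\Phi_{n,i})_{1\le i\le n}$ is independent, the Laplace functional factorizes, $L_{\Phi_n}(f)=\prod_{i=1}^{n}L_{\Phi_{n,i}}(f)$, and by the formula recalled in Section II each factor equals the Fredholm determinant $\det(I-A_{n,i})$ of $I-A_{n,i}$, where $A_{n,i}=\sqrt{1-e^{-f}}\,K_{c_i/n,\,\beta_{n,i}}\,\sqrt{1-e^{-f}}$ is the integral operator on $L^2(\mathbb{C})$ with kernel supported in $S\times S$; the relevant parameter is $c_i/n$ since $\Phi_{n,i}$ has intensity $(n\pi)^{-1}c_i$. (If one keeps the Carleman--Fredholm normalization $\det_2$ of Section II, use $\det_2(I-A_{n,i})=\det(I-A_{n,i})\,e^{\operatorname{Tr}(A_{n,i})}$; the bookkeeping below is unchanged.)

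Next I would take logarithms and expand. From the identity $|K_{c,\beta}(x,y)|=\tfrac{c}{\pi}e^{-\frac{c}{2\beta}|x-y|^2}\le\tfrac{c}{\pi}$ one gets $\|A_{n,i}\|\le\|A_{n,i}\|_{\mathrm{HS}}\le\tfrac{c_i}{n\pi}\,|S|\,\|1-e^{-f}\|_\infty=O(1/n)$, \emph{uniformly in $i$ and in $\beta_{n,i}$} --- so the fact that the $\beta_{n,i}$ are only known to lie in $]0,1]$ causes no trouble. Hence, for $n$ large, every $\|A_{n,i}\|<1$ and $\log\det(I-A_{n,i})=-\operatorname{Tr}(A_{n,i})-\sum_{k\ge2}\tfrac1k\operatorname{Tr}(A_{n,i}^{\,k})$. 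The first-order term is \emph{exact}: since the diagonal of the kernel is the (constant) intensity, $K_{c,\beta}(x,x)=c\pi^{-1}$, we get $\operatorname{Tr}(A_{n,i})=\tfrac{c_i}{n\pi}\!\int_{\mathbb{C}}(1-e^{-f(x)})\,dx$, so that $\sum_{i=1}^{n}\operatorname{Tr}(A_{n,i})=\big(\tfrac1n\sum_{i=1}^{n}c_i\big)\tfrac1\pi\!\int_{\mathbb{C}}(1-e^{-f})\longrightarrow\tfrac{c}{\pi}\!\int_{\mathbb{C}}(1-e^{-f})$ by hypothesis (ii) --- which is precisely $-\log L_\Phi(f)$.

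It then remains --- and this is the main obstacle --- to show that the remainder $\sum_{i=1}^{n}\sum_{k\ge2}\tfrac1k\operatorname{Tr}(A_{n,i}^{\,k})$ tends to $0$; the subtlety is that it is a double sum whose number of inner series grows with $n$, so the estimates must be uniform in $i$. Bounding the iterated integral defining $\operatorname{Tr}(A_{n,i}^{\,k})$ by the pointwise kernel estimate gives $|\operatorname{Tr}(A_{n,i}^{\,k})|\le(Cc_i/n)^k$ with $C=\|1-e^{-f}\|_\infty\,|S|/\pi$ independent of $i$ and $\beta_{n,i}$; summing the geometric tail, $\sum_{k\ge2}\tfrac1k|\operatorname{Tr}(A_{n,i}^{\,k})|\le(Cc_i/n)^2/(1-Cc_i/n)$. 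Now hypothesis (i) gives $c_i\le M$ for all $i$, hence $c_i^2\le Mc_i$, and for $n$ large $\sum_{i=1}^{n}(Cc_i/n)^2/(1-Cc_i/n)\le\tfrac{2C^2M}{n}\cdot\tfrac1n\sum_{i=1}^{n}c_i\to0$, again by (ii). Combining this with the previous paragraph yields $\log L_{\Phi_n}(f)\to-c\pi^{-1}\!\int_{\mathbb{C}}(1-e^{-f(x)})\,dx=\log L_\Phi(f)$ for every admissible $f$, whence $\Phi_n\Rightarrow\Phi$.

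In summary, the genuinely delicate point is the uniform-in-$i$ control of the logarithmic expansion of the Fredholm determinants (in particular checking $\max_i Cc_i/n<1$ for $n$ large so that each series converges, and summing their tails against the growing index $n$); everything else --- the factorization under independence, the identities $K_{c,\beta}(x,x)=c\pi^{-1}$ and $|K_{c,\beta}(x,y)|=\tfrac c\pi e^{-\frac{c}{2\beta}|x-y|^2}$, and the characterization of weak convergence of point processes by Laplace functionals --- is either immediate or standard.
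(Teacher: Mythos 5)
Your proof is correct, but it takes a genuinely different route from the paper's. The paper does not touch Laplace functionals: it verifies a three-condition Poisson-convergence criterion (stated as Theorem~\ref{Theo:2} in the appendix) involving the void probability $\mathbb{P}(\Phi_n(A)=0)$, the one-point probability $\mathbb{P}(\Phi_n(A)=1)$, and a tightness condition, and it computes the first two quantities for each $\Phi_{n,i}$ from Goldman's explicit series $\mathbb{P}(\Phi_{n,i}(A)=0)=1+\sum_{p\geq1}\frac{(-1)^p}{p}\int_{A^p}\det[K_{n,i}]\,d\ell^p$, using the determinant bound $0\leq\det[K_{n,i}](v_1,\dots,v_p)\leq\|K_{n,i}\|_\infty^p\leq(M/(n\pi))^p$ to show each factor is $1-\frac{c_i|A|}{n\pi}+O(n^{-2})$ before multiplying over $i$. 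You instead factor the Laplace functional over the independent components, write each factor as a Fredholm determinant, and control the logarithmic expansion $-\sum_{k\geq1}\frac1k\operatorname{Tr}(A_{n,i}^{\,k})$ uniformly in $i$. The two arguments hinge on the same essential estimate --- $\|K_{n,i}\|_\infty=c_i(n\pi)^{-1}=O(1/n)$ uniformly in $i$ and $\beta_{n,i}$, so that each factor contributes its first-order term exactly (via $K(x,x)=c_i(n\pi)^{-1}$, matching hypothesis (ii)) plus an $O(n^{-2})$ error that survives summation over $n$ factors --- but your version buys a more self-contained and standard proof: it needs only the Laplace-functional characterization of weak convergence and the determinantal Laplace formula already quoted in Section~II, avoiding both Goldman's proposition and the somewhat specialized criterion of Theorem~\ref{Theo:2} (whose condition (ii), a one-sided $\limsup$ inequality, deserves its own justification). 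It also handles tightness for free, whereas the paper must argue condition (iii) separately via Markov's inequality. The paper's route, in exchange, yields slightly more concrete information, namely the asymptotics of the small-count probabilities themselves. Your parenthetical reconciling the Fredholm and Carleman--Fredholm normalizations via $\det_2(I-A)=\det(I-A)e^{\operatorname{Tr}(A)}$ is a worthwhile precision the paper glosses over.
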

\begin{proof}
This theorem is proven in Appendix \ref{a1}
\end{proof}

Hypotheses (i) and (ii) of Th.\ref{Theo:1} are quite restrictive because the intensities of each $\beta$-GPP are dependent of $n$. However, in practice, we mainly work with finite families of $\beta$-GPP. Therefore, we can choose the value of the $(c_i)_{i \in {1...n}}$ such that they match the real values of the intensity of each $\beta$-GPP. 

\section{Simulations}

In this section we introduce the fitting method that is used to obtain the parameter $\beta$. We also present the results from the fitting of each deployment and operator in Paris, France. 

\subsection{Summary statistic}

In order to fit the real deployment to the $\beta$-GPP model, we introduce the $J$ function that characterize any point process. This function is a summary statistic based on inter-point distances. General information about summary statistics can be found in \cite{moller2003statistical}.  
Let $u$ be any location in the plane $\mathbb{C}$, and $\Phi= \left\lbrace x_i \right\rbrace_{i \in \mathbb{N}}$ be a realization of a $\beta$-GPP.

\begin{definition}[$J$-function of the $\beta$-GPP]
\begin{equation*}
    \begin{split}
		\forall r \in \mathbb{R}_+^* \; \;  J(r) &:= \frac{1-G(r)}{1-F(r)}, \\
			&= (1 - \beta + \beta e^{-\frac{c}{\beta} r^2 })^{-1},
	\end{split}
\end{equation*} 
where $G$ and $F$ are respectively the contact distribution function and the nearest-neighbor distance distribution function. 
\end{definition}

The $J$-function characterizes the repulsiveness or attractiveness of a point process. If the $J$-function is bigger than one, the point process is repulsive, otherwise, it is attractive. By definition of a PPP, $J(r) \equiv 1$ In the case of the $\beta$-GPP, the $J$-function is always bigger than one on its definition domain. This confirms that the $\beta$-GPP is a repulsive point process. It characterizes the $\beta$-GPP and its expression is proved in \cite{haenggi2014ginibre}.
\subsection{Fitting method}
\begin{figure}[h]
\centering
\includegraphics[width=88mm]{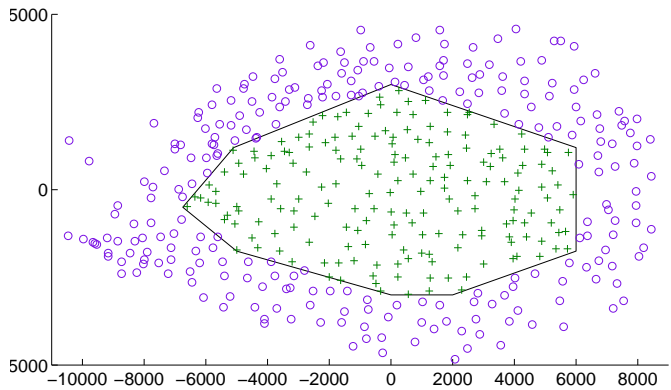}
\caption{Example of data sample for one GSM operator. The $J$-function is fitted on the points within the rectangular window.}
\label{Figure:1}
\end{figure}

Thanks to R language and the spatstat package \cite{baddeley2005spatstat}, the estimate of the $J$-function is derived from the raw data. Since we consider only a finite set of antennas, edge-effect might appear on the $J$-function estimate. We then have to keep a subset of the data to perform the estimation. Figure \ref{Figure:1} gives the window we considered for extracting data in Paris, France. It covers about 60\% of the city and its shape is chosen to match the geographical borders. The values of the $J$-function estimate are computed for $r \leq 600 \; m$. Above $600 \; m$, the estimation is not relevant due to the edge-effect. $J$ is then directly fitted on the estimate and the parameter $\beta$ is deduced. An example of fitting is given in Figure \ref{Figure:2}. It is clear that the point process formed by the base stations locations is repulsive and fits well the theoretical model. Therefore, it outfits the PPP model, because the $J$-function a PPP is equal to one for all $r$. In the next paragraph we present the results we obtained on raw data.

\subsection{Fitting results and interpretation}
\begin{figure}[h]
\centering
\includegraphics[width=88mm]{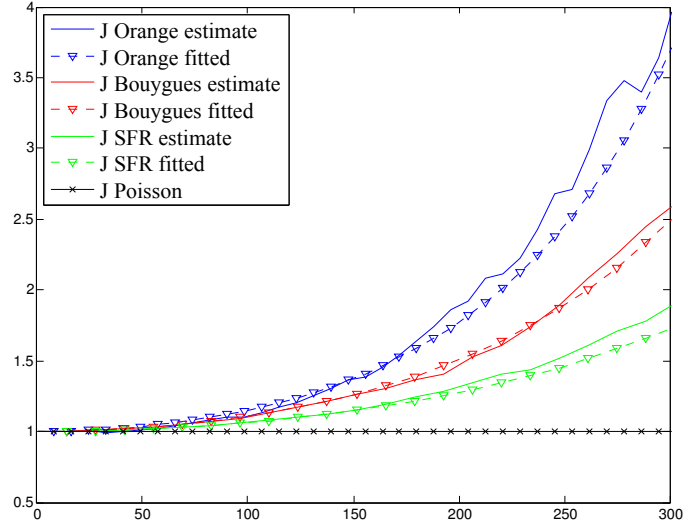}
\caption{Example of $J$-function fitting for Orange, SFR and Bouygues on the 3G 2100 MHz band.  As a comparison, $J(r) = 1$ for all $r$ in the PPP case.}
\label{Figure:2}
\end{figure}

\begin{table}[h]
\renewcommand{\arraystretch}{1.3}
\caption{Numerical values of $\beta$ and $\lambda$ per technology and operator}
\label{table:1}
\centering
\begin{tabular}{cc |c c c c|}

\cline{3-6}                                                  &           & Orange & SFR & Bouygues & Free \\ \hline 
\multicolumn{1}{|c}{\multirow{2}{*}{GSM 900}}    & $\beta$   & 0.81 & 0.76 & 0.65 & NA \\  
\multicolumn{1}{|c}{}                            & $\lambda$ & 2.39 & 2.65 & 2.63 & NA \\ \hline
\multicolumn{1}{|c}{\multirow{2}{*}{GSM 1800}}   & $\beta$   & 0.84 & 0.85 & 0.71 & NA \\ 
\multicolumn{1}{|c}{}                            & $\lambda$ & 3.00 & 2.39 & 3.59 & NA \\ \hline
\multicolumn{1}{|c}{\multirow{2}{*}{UMTS 900}}   & $\beta$   &  NA  & 0.97 & 0.53 & 0.89 \\ 
\multicolumn{1}{|c}{}                            & $\lambda$ &  NA  & 1.92 & 2.44 & 1.05 \\ \hline
\multicolumn{1}{|c}{\multirow{2}{*}{UMTS 2100}}  & $\beta$   & 1.04 & 0.65 & 0.82 & 0.89 \\ 
\multicolumn{1}{|c}{}                            & $\lambda$ & 3.27 & 3.48 & 4.04 & 1.05 \\ \hline
\multicolumn{1}{|c}{\multirow{2}{*}{LTE 800}}    & $\beta$   & 1.02 & 0.93 & 0.67 & NA \\ 
\multicolumn{1}{|c}{}                            & $\lambda$ & 0.67 & 1.65 & 1.87 & NA \\ \hline
\multicolumn{1}{|c}{\multirow{2}{*}{LTE 1800}}   & $\beta$   &  NA  &  NA  & 0.75 &  NA \\ 
\multicolumn{1}{|c}{}                            & $\lambda$ &  NA  &  NA  & 3.46 &  NA \\ \hline
\multicolumn{1}{|c}{\multirow{2}{*}{LTE 2600}}   & $\beta$   & 0.93 & 0.67 & 0.63 & 0.89 \\ 
\multicolumn{1}{|c}{}                            & $\lambda$ & 2.80 & 2.76 & 2.46 & 1.05 \\ \hline
\end{tabular}
\end{table}

Locations of the base stations are publicly available for the whole French territory and can be found online \cite{Cartoradio}. There are four operators in France and most of them provide 2G to 4G coverage. For each operator and each technology, numerical values of $\beta$ and $\lambda$ from the fitting are given in Table \ref{table:1}. 

Values of $\beta$ and $\lambda$ give some insights about the deployment strategy of each cellular network operators, especially about the coverage-capacity trade-off. Orange's high values of $\beta$ and $\lambda$ suggest that this operator deployed (as the historic, previously state-owned operator) a network that fulfilled an optimal coverage and an optimal traffic capacity (densely deployed network). However, SFR and Bouygues first deployed a network with a minimum of antennas (in order to abide by the coverage requirement of the regulator) and then gradually increased traffic capacity on hot-spots (by increasing locally the number of antennas). This involves adding more antennas on sites that are already covered, thus creating clusters and decreasing the value of $\beta$ and increasing the value of $\lambda$. The French telecommunication regulator (ARCEP) published yearly reports \cite{ARCEP} that suggest such evolution.

We deduce that French operators used two different deployment
strategies. The first strategy consists in fulfilling both coverage
and optimal traffic capacity at once. While the second strategy is to
deploy a network that abides to the coverage requirements in a first
stage, then in a second stage to increase the number of antennas on
hot-spots in order to improve the traffic capacity.

\begin{table}[h]
\renewcommand{\arraystretch}{1.3}
\caption{Numerical values of $\beta$ and $\lambda$ per operator and for the superposition of all the sites}
\label{table:2}
\centering
\begin{tabular}{c|c c c c c|}
\cline{2-6}                                      & Orange & SFR  & Bouygues & Free & \textbf{Superposition} \\ \hline
\multicolumn{1}{|c|}{$\beta$}            & 0.94   & 0.70 & 0.81     & 0.89 & \textbf{0.17}  \\
\multicolumn{1}{|c|}{$\lambda$}       & 3.48   & 3.70 & 4.23     & 1.05 & \textbf{10.28} \\ 
\multicolumn{1}{|c|}{Number of sites} & 185    & 197  & 225      & 56   & \textbf{547}   \\ \hline
\end{tabular}
\end{table}

When deploying their 3G or 4G networks, operators reused and shared
some existing 2G sites. Therefore, we consider that classifying the
base station sites per operator is more relevant than classifying them
by technologies. Table \ref{table:2} summaries these results. As
expected, previous conclusions still hold as values of $\beta$ are
stable between the two tables. We also notice that Free, as a newcomer
(2012), has a small amount of traffic to deal with, and therefore has
deployed less antennas than its competitors. Data analysis also shows
that the superposition of all sites is tending to a PPP as $\beta$ is
equal to $0.17$. Therefore the PPP model still holds as an indicator
of electromagnetic exposure of cellular networks.

\section{Conclusion}

In this paper, we successfully show that $\beta$-GPP is a realistic
model for base station distribution. The $\beta$ parameter is inferred
by using statistical tools on real data. Qualitative results on
network deployment are then derived. We also prove theoretically that
the superposition of multiple $\beta$-GPPs converges in distribution
to a PPP justifying observations made on real deployments. This will
have greater implications in modelling multi-tiers networks. We show
that the values of $\lambda$ and $\beta$ are characteristics of the
coverage-capacity trade-off. Future works will investigate the impact
$\lambda$ and $\beta$ on the design of optimal deployment strategies.

\appendix

\section{Proof of Theorem 1}

Let $A$ be a compact subset in $\mathbb{C}$. For a realization of a point process $\Phi$, the random variable $\Phi(A)$ is the number of points in the compact $A$. 
\begin{theorem}
\label{Theo:2}[Convergence in distribution theorem]
For any $A$ compact subset in $\mathbb{C}$, if the three following properties hold:
\begin{enumerate}[(i)]
\item $\underset{ n \to + \infty}{\lim} \mathbb{P} (\Phi_n(A)=0)=\mathbb{P} (\Phi(A)=0)$
\item $\underset{n\to +\infty}{\limsup}\ \mathbb{P}(\Phi_n(A)\leq1)\geq\mathbb{P}(\Phi(A)\leq1)$
\item $\underset{t\to +\infty}{\lim}\ \underset{n\to +\infty}{\limsup}\ \mathbb{P}(\Phi_n(A)>t)=0$
\end{enumerate}
Then: $\Phi_n \xrightarrow{d} \Phi$.
\end{theorem}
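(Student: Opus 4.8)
The plan is to deduce $\Phi_n\xrightarrow{d}\Phi$ from the fact that the limit $\Phi$, being a Poisson process, is a \emph{simple} point process, and that a simple point process on $\mathbb{C}$ is uniquely determined by its avoidance function $A\mapsto\mathbb{P}(\Phi(A)=0)$ evaluated on a dissecting ring of bounded Borel sets (see, e.g., \cite{moller2003statistical} and the monographs of Kallenberg and of Daley and Vere-Jones). Concretely, I would argue in four steps: (a) from (iii), show that $(\Phi_n)$ is tight in $\chi$; (b) along an arbitrary weakly convergent subsequence, use (i) to identify the avoidance function of the limit with that of $\Phi$; (c) use (ii) to show that the limit is itself simple; and (d) conclude that every subsequential limit equals $\Phi$ in law, which together with tightness yields $\Phi_n\xrightarrow{d}\Phi$.

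For (a): for each compact $A$, condition (iii) states exactly that the laws of the $\mathbb{R}_+$-valued variables $\Phi_n(A)$, $n\geq1$, form a uniformly tight family, and by the classical relative-compactness criterion for point processes under the vague topology this makes $(\Phi_n)$ relatively compact in distribution; so by Prohorov's theorem every subsequence has a further subsequence $(\Phi_{n_k})$ with $\Phi_{n_k}\xrightarrow{d}\Psi$ for some point process $\Psi$ on $\mathbb{C}$. For (b), I would fix such a $\Psi$ and work with a countable dissecting class $\mathcal{R}$ of bounded Borel sets — finite unions of closed balls with rational centres and all but countably many radii — chosen so that every $A\in\mathcal{R}$ is a continuity set for both $\Phi$ and $\Psi$, i.e.\ its topological boundary almost surely carries no point of either process. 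For such $A$ the functional $\eta\mapsto\mathbf{1}\{\eta(A)=0\}$ is $\Psi$-a.s.\ continuous, so $\mathbb{P}(\Psi(A)=0)=\lim_k\mathbb{P}(\Phi_{n_k}(A)=0)=\mathbb{P}(\Phi(A)=0)$ by (i); hence $\Psi$ and $\Phi$ have the same avoidance function on $\mathcal{R}$.

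For (c): given $A\in\mathcal{R}$ and dissections $\{A_j^{(m)}\}_j\subset\mathcal{R}$ of $A$ with mesh tending to $0$, each cell $C=A_j^{(m)}$ is a continuity set for $\Psi$, so $\eta\mapsto\eta(C)$ is $\Psi$-a.s.\ continuous and $\mathbb{P}(\Phi_{n_k}(C)\leq1)\to\mathbb{P}(\Psi(C)\leq1)$; applying hypothesis (ii) to the compact set $C$ (the assumptions hold for every compact set) then gives $\mathbb{P}(\Psi(C)\leq1)\geq\mathbb{P}(\Phi(C)\leq1)$, equivalently $\mathbb{P}(\Psi(C)\geq2)\leq\mathbb{P}(\Phi(C)\geq2)$. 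Summing over $j$ and using that $\Phi$ is simple — so $\sum_j\mathbb{P}(\Phi(A_j^{(m)})\geq2)\to0$ as $m\to\infty$ along a dissecting system — one gets $\mathbb{P}(\Psi\text{ has a multiple point in }A)=0$, and letting $A\uparrow\mathbb{C}$ shows $\Psi$ is a.s.\ simple. By (b) and the uniqueness of a simple point process given its avoidance function on a dissecting ring, $\Psi\stackrel{d}{=}\Phi$; since this holds for every subsequential limit, the relative compactness from (a) upgrades it to $\Phi_n\xrightarrow{d}\Phi$.

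I expect the main obstacle to be step (c): extracting usable information about the subsequential limit $\Psi$ from the one-sided hypothesis (ii), i.e.\ justifying $\mathbb{P}(\Psi(C)\leq1)\geq\mathbb{P}(\Phi(C)\leq1)$ for the dissecting cells $C$. This has to be done in tandem with the extraction in (a) — the class $\mathcal{R}$ can only be fixed once $\Psi$ is known, which forces a diagonal argument — and one must be careful about how the $\limsup$ in (ii) behaves under the passage to a subsequence. The remaining ingredients (the tightness criterion, the continuity-set form of the Portmanteau theorem, the avoidance-function characterisation of simple point processes, and the dissecting-system characterisation of simplicity) are all classical.
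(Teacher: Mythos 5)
First, a point of comparison: the paper never actually proves Theorem~\ref{Theo:2} --- it is stated in the appendix as a known convergence criterion (essentially Kallenberg's theorem on convergence of point processes towards a simple limit, see also Daley and Vere-Jones) and is only \emph{used} to establish Theorem~\ref{Theo:1}. So your proposal does not compete with an argument in the paper; it reconstructs the standard proof of that classical criterion, and its skeleton is the right one: tightness from (iii) plus Prohorov, Portmanteau on stochastic continuity sets to transfer the avoidance probabilities via (i), simplicity of the subsequential limit via (ii) and a dissecting system, and identification by the avoidance-function uniqueness theorem for simple point processes. Two side remarks: you do need $\Phi$ simple (automatic in the paper's application, where $\Phi$ is a Poisson process of intensity $c\pi^{-1}$, and also needed for your dissecting-sum step, which in addition uses local finiteness of the intensity of $\Phi$); and no diagonal argument is required, since the continuity-set class may be chosen afresh for each subsequential limit $\Psi$.

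The obstacle you flag in step (c) is, however, a genuine gap and not a technicality: from $\limsup_n \mathbb{P}(\Phi_n(C)\leq 1)\geq\mathbb{P}(\Phi(C)\leq 1)$ you cannot deduce anything about the limit of $\mathbb{P}(\Phi_{n_k}(C)\leq 1)$ along the particular subsequence you extracted, because the $\limsup$ may be realised along a different subsequence. Indeed the statement with $\limsup$ is false as written: let $\Phi$ be a homogeneous Poisson process of intensity $\lambda$; for even $n$ let $\Phi_n$ be a copy of $\Phi$, and for odd $n$ let $\Phi_n$ be the (simple) process obtained from an independent Poisson process of intensity $\lambda$ by replacing each point $x$ with the pair $\{x,\,x+(1/n,0)\}$. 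For every compact $A$, $\mathbb{P}(\Phi_n(A)=0)=\exp\left(-\lambda\,|A\cup(A-(1/n,0))|\right)\to \exp(-\lambda |A|)$, so (i) holds; (iii) is clear; and (ii) holds because the $\limsup$ is attained along the even subsequence; yet the odd subsequence converges in distribution to the Poisson process with every point doubled, so $\Phi_n$ does not converge to $\Phi$. The hypothesis your argument actually needs --- and the standard Kallenberg condition --- is $\liminf_n\mathbb{P}(\Phi_n(A)\leq 1)\geq\mathbb{P}(\Phi(A)\leq 1)$, equivalently $\limsup_n\mathbb{P}(\Phi_n(A)>1)\leq\mathbb{P}(\Phi(A)>1)$; with it every subsequence inherits the bound and your step (c) goes through verbatim. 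This correction costs the paper nothing: the appendix's estimate on $\mathbb{P}(\Phi_n(A)=1)$ holds for every $n$, hence already gives the $\liminf$ bound, so Theorem~\ref{Theo:1} is unaffected; but as a proof of Theorem~\ref{Theo:2} exactly as stated, your argument cannot be completed, and the statement itself should be amended.
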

\label{a1}
Th.\ref{Theo:1} is achieved if all conditions of Th.\ref{Theo:2} are satisfied. \\
\textit{Condition (iii)}. Thanks to Markov inequality, and since $\left (\mathbb{E}[\Phi_n(A)] \right)_{n \in \mathbb{N}^*}$ is bounded, (iii) holds.
\\
\\
\textit{Conditions (i) and (ii)}. For a Poisson point process, we know that:
\begin{align*}
\mathbb{P} \left( \Phi (A) \! = \! 0 \right) \! &= \! e^{- |A| c \pi^{-1} }, \\
\mathbb{P} \left( \Phi (A) \! \leq \! 1 \right) \! &= \! e^{- |A| c \pi^{-1} } \! \left( 1 \! + \! |A| c \pi^{-1} \right).
\end{align*}
We have yet to calculate the left-hand side of both inequalities (i) and (ii). Let $K_{n,i}$ be the kernel of a $\beta_{n,i}$-GPP. Proposition 3 of Goldman's paper \cite{goldman2010palm} states that:
\begin{align*}
&\mathbb{P}(\Phi_{n,i}(A) \! = \! 0)  \! = \! 1 \! + \! \sum_{p\geq1} \! \frac{(-1)^p}{p} \! \int\limits_{A^p} \! \det[K_{n,i}](v_1,...,v_p) \ell^p(dv), \\
&\mathbb{P}(\Phi_{n,i}(A) \! = \! 1) \! = \! \mathbb{P}(\Phi_{n,i}(A) \! = \! 0) \! \int\limits_A \! R_{n,i}(z)\ell(dz),
\end{align*}
where $\ell$ designates the Lebesgue measure and 

\begin{equation*}
R_{n,i}(z) \! = \! K_{n,i}(z,z) \! + \! \sum_{j\geq2}K_{n,i}^{(j)}(z,z).
\end{equation*}
By hypothesis of Th.\ref{Theo:1}, $\left( c_i \right)_{i \in \mathbb{N}^*}$ is bounded. We also know that $\| K_{n,i}\|_{\infty} = c_i (n \pi)^{-1}$. We can then prove recursively for all $p \geq 1$, there exists a $M > 0$ such that for all $i \in \mathbb{N}^*$,
\begin{equation*}
\label{bound}
0\leq \det[K_{n,i}](v_1,...,v_p) \leq  \| K_{n,i}\|_{\infty}^p \leq \left( \frac{M} {n \pi} \right)^p.
\end{equation*}
Therefore there exists two bounded sequences $(\epsilon_n)_{n \in \mathbb{N}^*}$ and $(\epsilon'_n)_{n \in \mathbb{N}^*}$ independent of $i$ and , such that:
\begin{align*}
&\mathbb{P}(\Phi_{n,i}(A) \! = \! 0) = 1 \! - \! \frac{c_i|A|}{n\pi} \! + \! n^{-2} \epsilon_n.,\\
&\mathbb{P}(\Phi_{n,i}(A) \! = \! 1) = \frac{c_i|A|}{n\pi}+ \! n^{-2} \epsilon'_n.
\end{align*} 
Hence,
\begin{align*}
&\mathbb{P}(\Phi_n(A)\!=\!0) \! = \! e^{O(n^{-2})}e^{-\sum_{i=1}^n\frac{c_i|A|}{n\pi}}, \\
&\mathbb{P}(\Phi_n(A)\!=\!1) \! \geq \! e^{o(\frac{1}{n})} e^{-\sum_{j=1}^n\frac{c_j|A|}{n\pi}} \sum_{i=1}^n \frac{c_i|A|}{n\pi} +o(1).
\end{align*}
Therefore,
\begin{align*}
&\lim_{n \to \infty} \mathbb{P}(\Phi_n(A)\!=\!0) = e^{-|A|c \pi^{-1}}, \\
&\limsup_{n \to \infty} \mathbb{P}(\Phi_n(A)\!=\!1) \geq \frac{c|A|}{\pi} e^{-|A|c \pi^{-1}}, 
\end{align*}
consequently (i) and (ii) hold.
\qed
% Generated by IEEEtran.bst, version: 1.13 (2008/09/30)

%\bibliographystyle{IEEEtran}
%\bibliography{IEEEabrv,biblio}

\begin{thebibliography}{10}
\providecommand{\url}[1]{#1}
\csname url@samestyle\endcsname
\providecommand{\newblock}{\relax}
\providecommand{\bibinfo}[2]{#2}
\providecommand{\BIBentrySTDinterwordspacing}{\spaceskip=0pt\relax}
\providecommand{\BIBentryALTinterwordstretchfactor}{4}
\providecommand{\BIBentryALTinterwordspacing}{\spaceskip=\fontdimen2\font plus
\BIBentryALTinterwordstretchfactor\fontdimen3\font minus
  \fontdimen4\font\relax}
\providecommand{\BIBforeignlanguage}[2]{{%
\expandafter\ifx\csname l@#1\endcsname\relax
\typeout{** WARNING: IEEEtran.bst: No hyphenation pattern has been}%
\typeout{** loaded for the language `#1'. Using the pattern for}%
\typeout{** the default language instead.}%
\else
\language=\csname l@#1\endcsname
\fi
#2}}
\providecommand{\BIBdecl}{\relax}
\BIBdecl

\bibitem{Baccelli_article}
J.~Andrews, F.~Baccelli, and R.~Ganti, ``A tractable approach to coverage and
  rate in cellular networks,'' \emph{Communications, IEEE Transactions on},
  vol.~59, no.~11, pp. 3122--3134, November 2011.

\bibitem{Baccelli_livre}
F.~Baccelli and B.~Blaszczyszyn, \emph{{Stochastic Geometry and Wireless
  Networks, Volume I - Theory}}, ser. Foundations and Trends in Networking Vol.
  3: No 3-4, pp 249-449.\hskip 1em plus 0.5em minus 0.4em\relax {NoW
  Publishers}, 2009, vol.~1.

\bibitem{k-tier}
H.~S. Dhillon, R.~K. Ganti, F.~Baccelli, and J.~G. Andrews, ``Modeling and
  analysis of k-tier downlink heterogeneous cellular networks,'' \emph{Selected
  Areas in Communications, IEEE Journal on}, vol.~30, no.~3, pp. 550--560,
  2012.

\bibitem{Nakata20147}
\BIBentryALTinterwordspacing
I.~Nakata and N.~Miyoshi, ``Spatial stochastic models for analysis of
  heterogeneous cellular networks with repulsively deployed base stations,''
  \emph{Performance Evaluation}, vol.~78, no.~0, pp. 7 -- 17, 2014. [Online].
  Available:
  \url{http://www.sciencedirect.com/science/article/pii/S0166531614000546}
\BIBentrySTDinterwordspacing

\bibitem{matern_hard_core}
M.~Haenggi, ``Mean interference in hard-core wireless networks,''
  \emph{Communications Letters, IEEE}, vol.~15, no.~8, pp. 792--794, 2011.

\bibitem{shirai_taka_derterminant}
T.~Shirai and Y.~Takahashi, ``Random point fields associated with certain
  fredholm determinants i: fermion, poisson and boson point processes,''
  \emph{Journal of Functional Analysis}, vol. 205, no.~2, pp. 414 -- 463, 2003.

\bibitem{miyoshi2014cellular}
N.~Miyoshi, T.~Shirai \emph{et~al.}, ``A cellular network model with ginibre
  configured base stations,'' \emph{Advances in Applied Probability}, vol.~46,
  no.~3, pp. 832--845, 2014.

\bibitem{haenggi2014ginibre}
N.~Deng, W.~Zhou, and M.~Haenggi, ``The ginibre point process as a model for
  wireless networks with repulsion,'' \emph{arXiv preprint arXiv:1401.3677},
  2014.

\bibitem{Cartoradio}
\BIBentryALTinterwordspacing
ANFR. (2014) Cartoradio. [Online]. Available: \url{http://www.cartoradio.fr}
\BIBentrySTDinterwordspacing

\bibitem{moller2003statistical}
J.~Moller and R.~P. Waagepetersen, \emph{Statistical inference and simulation
  for spatial point processes}.\hskip 1em plus 0.5em minus 0.4em\relax CRC
  Press, 2004.

\bibitem{baddeley2005spatstat}
A.~Baddeley and R.~Turner, ``Spatstat: an r package for analyzing spatial point
  patterns,'' \emph{Journal of statistical software}, vol.~12, no.~6, pp.
  1--42, 2005.

\bibitem{ARCEP}
\BIBentryALTinterwordspacing
ARCEP. (2014) La qualit\'e des services mobiles (only available in french
  language). [Online]. Available: \url{http://www.arcep.fr/index.php?id=9905}
\BIBentrySTDinterwordspacing

\bibitem{goldman2010palm}
A.~Goldman \emph{et~al.}, ``The palm measure and the voronoi tessellation for
  the ginibre process,'' \emph{The Annals of Applied Probability}, vol.~20,
  no.~1, pp. 90--128, 2010.

\end{thebibliography}
\end{document}